\newcommand{\Proj}[2]{\mathbb{P}^{#1}_{#2}} 
\newcommand{\Field}[1]{#1}			
\newcommand{\Ring}[1]{#1}			
\newcommand{\SL}[2]{\mathrm{SL}_{#1}(#2)}    
\newcommand{\GL}[2]{\mathrm{GL}_{#1}(#2)}		
\newcommand{\Q}{\mathbb{Q}}            
\newcommand{\F}[1]{\mathbb{F}_{#1}}    
\newcommand{\pIdeal}[1]{\mathfrak{p}_{\tiny\!#1}}
\newcommand{\pElem}{\varpi}
\newcommand{\Units}{\mathrm{U}}
\newcommand{\oRing}[1]{\mathfrak{o}_{\tiny\!#1}}
\newcommand{\val}{\mathrm{v}} 				
\newcommand{\galConj}[1]{\overline{#1}}			
\newcommand{\relNorm}[2]{\mathrm{N}_{{\hspace{0.5pt}\!_{#1}}\hspace{0.5pt}}\!\left(#2\right)}
\newcommand{\relTrace}[2]{\mathrm{Tr}_{\hspace{0.5pt}\!_{#1}}\!\left(#2\right)}
\newcommand{\NormOne}{\NormFib{1}}
\newcommand{\NormFib}[1]{\mathrm{N}^{{#1}}}
\newcommand{\ramInd}[1]{\mathrm{e}_{#1}}	 	
\newcommand{\diff}[1]{\delta\!\left(#1\right)} 				
\newcommand{\Mat}[1]{\mathtt{#1}}		
\newcommand{\matW}{\mathtt{w}}		
\newcommand{\matP}{\mathtt{p}}			
\newcommand{\qElem}{\eElem} 				
\newcommand{\trZeroMat}{\tau}
\newcommand{\qPnt}{\alpha}				
\newcommand{\Tree}{\mathcal{T}}     
\newcommand{\conj}[2]{#1^{#2}}  		
\newcommand{\smVertex}[3]{\left[\begin{smallmatrix}#1\\#2\end{smallmatrix}\right]_{#3}}
\newcommand{\Id}[1]{1_{{#1}}} 					
\newcommand{\eElem}{\lambda}  					
\newcommand{\mElem}{\alpha}					
\newcommand{\ceiling}[1]{\left\lceil#1\right\rceil} 
\newcommand{\floor}[1]{{\left\lfloor#1\right\rfloor}} 
\renewcommand{\mod}[1]{{\hspace{2pt}(\mathtt{mod}\hspace{4pt}#1)}}
\newcommand{\grp}[1]{\mathrm{#1}}						
\newcommand{\G}{\mathrm{G}}							
\newcommand{\Torus}{\grp{T}}							
\newcommand{\I}[1]{\grp{I}_{#1}}						
\newcommand{\K}[1]{\grp{K}_{#1}}						
\newcommand{\J}[1]{\grp{J}_{#1}}						
\theoremstyle{plain}
\newtheorem*{theorem*}{Theorem}
\newtheorem{lemma}{Lemma}
\newtheorem*{lemma*}{Lemma}
\theoremstyle{definition}
\newtheorem*{definition*}{Definition}
\newtheorem*{notation*}{Notation}
\theoremstyle{comment}
\newtheoremstyle{named}{}{}{\itshape}{}{\bfseries}{.}{.5em}{#1 #3}
\theoremstyle{named}
\title{Stabilizers of quadratic points on the Bruhat-Tits tree of $\mathrm{SL}(2)$ over finite extensions of $\Q_2$}
\author{Terence Joseph K\i vran-Swaine\\
\texttt{tjswaine@gmail.com}}
\begin{document}
\maketitle
\begin{abstract}For $\Field{F}$, a finite extension of $\Q_2$, and $\Field{E}$ a quadratic extension of $\Field{F}$, I compute the stabilizer in $\SL{2}{\Field{F}}$ of a point in the Bruhat-Tits tree of $\SL{2}{\Field{E}}$.\end{abstract}
\paragraph{Introduction.}
\paragraph{Introduction.}
This paper is a first step towards constructing an explicit, uniform parametrization of the irreducible, admissible representations of $\grp{G}=\SL{2}{\Field{F}}$ where $\Field{F}$ is a finite extension of $\Q_2$ by means similar to that for $\GL{2}{\Field{F}}$ found in \cite{BH2006}.\\
\indent In his Ph.D. thesis, \cite{Sh1966}, Joseph Shalika establishes that irreducible subrepresentations of the representation of Weil (\cite{W1964}) include an exhaustive list of irreducible, cuspidal representations over $\SL{2}{\Q_p}$ for $p\neq 2$, by classifying such representations by their restriction to particular maximal compact subgroups of $\mathrm{SL}_2$.   In his 1972 paper, \cite{C1972}, Casselman extends Shalika's technique to account for the irreducible cuspidal representations which occur in the construction of Weil for $\SL{2}{\Field{F}}$ of even residual characteristic. In 1976 \cite{NW1976}, Nobs and Wolfart construct an exhaustive list of ``exceptional" representations of $\grp{G}$ that exist outside the construction of Weil as subrepresentations of tensor products of representations in the Weil representation.\footnote{ In \cite{N1978}, Nobs presents the four conjugacy classes of the induction of these representations to $\GL{2}{\Q_2}$.  Such exceptional represntations on $\GL{2}{\Field{F}}$ were shown in 1990 to restrict to irreducible, exceptional representations of $\grp{G}$ by Kutzko and Pantoja \cite{KP1990}.}\\
\indent In his 1978 papers, \cite{K1978p1} and \cite{K1978p2}, Kutzko builds on his thesis \cite{K1972}\footnote{This was announced in \cite{K1973}.} to explicitly construct cuspidal representations of $\GL{2}{\Field{F}}$ for arbitrary residual characteristic.  Such an examination of representations by restriction to compact subgroups had been suggested by Howe in 1977 (\cite{Ho1977}).   This parametrization became know as the theory of cuspidal types or the theory of minimal $\mathsf{K}$-types.  This technique was adapted to $\SL{2}{\Field{F}}$ for $F$ of odd residual characteristic by Manderscheid in his 1984 papers, \cite{M1} and \cite{M2}.  In 1994 Moy and Prasad proved that a more general approach for classifying irreducible representations could be employed for arbitrary reductive groups over arbitrary $p$.  Resently such approaches have been employed in work by Yu, \cite{Y2001} and in the exposition of the harmonic analyis of $\SL{2}{\Field{F}}$ for $p\neq 2$ by Adler, Debacker, Sally and Spice, \cite{ADSS2011}.\\
\indent The aforementioned constructions all depend on constructing certain representations of compact open subgroups of $\grp{G}$ which are stabilized in $\grp{G}$ only by elements in their domain.  Such representations are constructed by extending dimension one representations on abelian subgroups or quotients of subgroups to their $\grp{G}$-stabilizers.  In order to explicitly compute stabilizers of these characters of compact open subgroups of $\grp{G}$, I compute the stabilizers of quadratic points over the Bruhat-Tits tree of $\grp{G}$ the action on which is closely related to the action on the characters of compact open subgroups via the eigenspaces of trace-zero matrices.  This is stated as the Theorem on page \pageref{thm:stab}.  To facilitate smoother reading of this note I have included proofs of a couple of more elementary facts that I use in the Appendix.

\paragraph{Notation.}
Here, $n$ denotes a non-negative integer and $\Proj{1}{\Ring{A}}$ is the projective line over a commutative ring $\Ring{A}$.\\
\indent For a real number $x$, $\floor{x}$ will denote the greatest integer $\leq x$ and $\ceiling{x}$ will denote the least integer $\geq x$.\\
\indent Let $\Field{F}$ be an algebraic extension of $\Q_2$ of ramification $\ramInd{}=\ramInd{\Field{F}}$ with integer ring $\oRing{\Field{F}}$, prime ideal $\pIdeal{\Field{F}}$, local uniformizing parameter (l.u.p.) $\pElem:=\pElem_{\Field{F}}$, valuation map $\val_{\Field{F}}$ and unit filtration $\Units_{\Field{F}}^0:=\Units_{\Field{F}}=\oRing{\Field{E}}^{\times}$, $\Units^n_{\Field{F}}:=\{x\in\Units_{\Field{F}}|x\equiv1 \mod{\pIdeal{\Field{F}}^n}\}$ for $n>0$.
\\
\indent For a group of matrices $\grp{H}$ and an invertible matrix $\Mat{A}$, let $\conj{\grp{H}}{\Mat{A}}$ be the conjugate $\Mat{A}\grp{H}\Mat{A}^{-1}$.  Of particular importance will be conjugation by $\matP:=\left(\begin{smallmatrix}
0&1\\\pElem_{\Field{F}}&0
\end{smallmatrix}\right)$. 
\\
\indent Let $\grp{G}(\Field{F})=\SL{2}{\Field{F}}$, with maximal compact subgroup $\grp{K}=\grp{K}(\Field{F}):=\SL{2}{\oRing{\Field{F}}}$, 
and Bruhat-Tits tree $\Tree_{\Field{F}}$\cite[Chapter 2]{S1977}.\\
\indent Let $\Field{E}/\Field{F}$ be a quadratic extension of ramification index $\ramInd{\Field{F}/\Field{E}}$.  I will use notation for $\Field{E}$ similar to $\Field{F}$.  For $z\in\Field{E}$, let $z\mapsto\galConj{z}$ denote the conjugate map, $\mathrm{Tr}$ denote the trace map, $\mathrm{N}$ denote the norm map, and $\delta$ denote the different map ({\it c.f. }\cite[Chapter 3]{N1999}), all with respect to $\Field{E}/\Field{F}$.  
 Finally define the following matrix, with coefficients in $\oRing{\Field{F}}$,
\begin{equation}
\matP_{\Field{E}}:=
\begin{cases}
\quad\pElem_{\Field{F}}1_2 &\mbox{if } \ramInd{\Field{F}/\Field{E}}=1\\\
\left(\begin{smallmatrix}\relTrace{}{\pElem_{\Field{E}}}&1\\- \relNorm{}{\pElem_{\Field{E}}}&0\end{smallmatrix}\right) & \mbox{otherwise.}
\end{cases}
\end{equation}

\paragraph{Quadratic points over the Bruhat-Tits tree.}  If $\Field{E}/\Field{F}$ is ramified, it is totally, wildly ramified.  As is observed in \cite{T1979}, an injection of fields $\Field{F}\rightarrow\Field{E}$ of finite index naturally induces a topological injection of trees $\Tree_{\Field{F}}\hookrightarrow\Tree_{\Field{E}}$ which under the graph metric enjoys a global dilation factor of $\ramInd{\Field{E}/\Field{F}}$.\footnote{The structure of the injection of trees is established in \autoref{lem:Trees} in the appendix.  For a fascinating set of examples of this phenomenon see \cite{CK2005}.}  Moreover the Galois action on $\Field{E}/\Field{F}$ induces one on $\Tree_{\Field{E}}$.  Details of this relationship can be found in the appendix.   A \emph{quadratic point over  $\Tree_{\Field{F}}$} is the orbit of a vertex in the difference of sets $\Tree_{\Field{E}}\setminus\Tree_{\Field{F}}$ under conjugation for some quadratic extension, $\Field{E}/\Field{F}$. The \emph{level} of a quadratic point is the distance in $\Tree_{\Field{E}}$ from that orbit to the vertices of $\Tree_{\Field{F}}$.
\paragraph{Remark.}Quadratic points can also be characterized as points in the Berkovich space of $\Proj{1}{\Field{F}}$ which valuate $\Field{F}$-rational functions over a sufficiently small ball centered at a point in $\Proj{1}{\Field{E}}\setminus\Proj{1}{\Field{F}}$.  That the latter characterization coincides with the former is clear since the multiplicity of a zero or pole for a $\Field{F}$-rational function at a point $z\in\Proj{1}{\Field{E}}$ is equal to the multiplicity at its conjugate.  \footnote{This second characterization owes thanks to Xander Faber for sharing with me his paper \cite{F2013}, which is an excellent introductory reference on Berkovich space.  For connections between Berkovich space and the Bruhat-Tits tree see the 2007 Arizona Winter School notes \cite{DT2007}.}
\paragraph{Eigenspaces and elliptic tori.} A vertex on a Bruhat-Tits tree of $\SL{2}{\Field{E}}$ can be identified with a ball in $\Proj{1}{\Field{E}}$ and consequently a set of one-dimensional subspaces of $\Field{E}\oplus\Field{E}$.  Via this identification, a quadratic point over $\Tree_{\Field{F}}$ is a class of pairs of distinct, conjugate dimension-one subspaces.  For each pair of conjugate subspaces, $\qPnt$, there is an elliptic torus, $\grp{T}_\qPnt$,  in  $\SL{2}{\Field{F}}$ the elements of which share those two subspaces as eigenspaces.  Denote by $[\qPnt]_n$ the quadratic point over $\Tree_{\Field{F}}$, identified with set of points in $\Proj{1}{\Field{E}}=\Proj{1}{\oRing{\Field{E}}}$ which share an image with $\qPnt$ under the natural map $\Proj{1}{\oRing{\Field{E}}}\rightarrow\Proj{1}{\Ring{R}}$ where $\Ring{R}=\oRing{\Field{E}}/\pIdeal{\Field{E}}^{n}$.
%
%
\begin{theorem*}\label{thm:stab} Let $\Field{F}$ be a finite extension of $\Q_2$ with ramification index $\ramInd{}$.  Let $[\qPnt]_{\bullet}$ be a quadratic point over $\Tree({\Field{F}})$ of level $n\ramInd{\Field{E}/\Field{F}}$ in $\Tree({\Field{E}})$.\\
Matrices which stabilize $[\qPnt]_{\bullet}$ in $\G$ are conjugate to the form $\Mat{t}\cdot\Mat{j}\cdot\Mat{s}$, where $\Mat{t}\in\grp{T}_{\qPnt}$, $\Mat{j}=\left(\begin{smallmatrix}
1+\pElem^{n-m} a& \pElem^{n-\varepsilon}b\\
 \pElem^{n}c & 1+\pElem^{n-m} d\\
\end{smallmatrix}\right)\in\grp{G}$ and $\Mat{s}=1_2$, if $\Field{E}/\Field{F}$ is unramified, and $\Mat{s}=\left(\begin{smallmatrix}1&0\\0&z\end{smallmatrix}\right)(x1_2+y\matP_{\Field{E}})$, otherwise.\\
Here
$x\in \{1\}\cup\Units_{\Field{F}}^{n-\ceiling{\frac{t}{2}}+\partial}\setminus\Units_{\Field{F}}^{n-m}$,  
$y \in  \{0\}\cup\pIdeal{\Field{F}}^{n-\floor{\frac{t}{2}}+1-\partial}\setminus\pIdeal{\Field{F}}^{n-1}$
and $z=\relNorm{}{x+y\pElem_{\Field{E}}}^{-1}$,\\
and $m=\min(\floor{\frac{n}{2}},\ramInd{})$, $t=\min(n,\val_{\Field{E}}(\diff{\pElem_{\Field{E}}})-1)$, $\varepsilon=\ramInd{\Field{E}/\Field{F}}-1$ and $\partial= \ceiling{(\ramInd{\Field{E}}+1-\val_{\Field{E}}(\diff{\pElem}))/\ramInd{\Field{E}}}$.
\end{theorem*}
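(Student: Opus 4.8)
The plan is to realise the stabilizer as an intersection inside $\SL{2}{\Field{E}}$ and then strip off the elliptic torus. First I would pass to a standard representative, which is harmless since conjugating $[\qPnt]_{\bullet}$ by an element of $\G$ conjugates its stabilizer. Fix $\theta\in\oRing{\Field{E}}$ with $\oRing{\Field{E}}=\oRing{\Field{F}}[\theta]$ — a root of unity when $\Field{E}/\Field{F}$ is unramified and $\theta=\pElem_{\Field{E}}$ otherwise — so that $\theta-\galConj{\theta}$ generates the different of $\Field{E}/\Field{F}$ and $\val_{\Field{E}}(\theta-\galConj{\theta})=\val_{\Field{E}}(\diff{\pElem_{\Field{E}}})$ (this valuation being $0$ in the unramified case). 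Letting $\Field{E}=\Field{F}\oplus\Field{F}\theta$ act on itself gives an embedding $\Field{E}^{\times}\hookrightarrow\GL{2}{\Field{F}}$ whose intersection with $\SL{2}{\Field{F}}$ is $\grp{T}_{\qPnt}$, diagonalised over $\Field{E}$ on the two eigenlines — the two ends of $\Tree_{\Field{E}}$ fixed by $\grp{T}_{\qPnt}$, swapped by Galois, which in suitable $\Field{E}$-coordinates are $\Field{E}(\theta,1)$ and $\Field{E}(\galConj{\theta},1)$. Using the description of $\Tree_{\Field{F}}\hookrightarrow\Tree_{\Field{E}}$ and of the Galois action recorded in the appendix, the quadratic point of level $n\ramInd{\Field{E}/\Field{F}}$ is represented by the vertex $v$ with $\oRing{\Field{E}}$-lattice $\Lambda_{v}=\oRing{\Field{E}}(\theta,1)+\oRing{\Field{E}}(\pElem^{n},0)$, where $\pElem^{n}\oRing{\Field{E}}=\pIdeal{\Field{E}}^{\,n\ramInd{\Field{E}/\Field{F}}}$ since $\pElem\oRing{\Field{E}}=\pIdeal{\Field{E}}^{\,\ramInd{\Field{E}/\Field{F}}}$; indeed this $v$ sits at distance $n\ramInd{\Field{E}/\Field{F}}$ from $\oRing{\Field{E}}^{2}$, which is the nearest vertex of $\Tree_{\Field{F}}$ precisely because $\val_{\Field{E}}(\theta-a)<\ramInd{\Field{E}/\Field{F}}$ for every $a\in\Field{F}$. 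Since $\grp{T}_{\qPnt}$ is compact and fixes the geodesic joining the two eigenline ends, it fixes $v$, so $\grp{T}_{\qPnt}$ lies in the stabilizer of $v$; I will compute the stabilizer of $v$ (equivalently of $\galConj{v}$), which is what the stated normal form describes, any swapping element — when present — coming from the evident nontrivial coset of $N_{\G}(\grp{T}_{\qPnt})$ and being handled by the same method.

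Next I would compute the intersection. Put $\Mat{h}=\left(\begin{smallmatrix}\theta&\pElem^{n}\\1&0\end{smallmatrix}\right)$, so that $\Lambda_{v}=\Mat{h}\,\oRing{\Field{E}}^{2}$ and $\mathrm{Stab}_{\SL{2}{\Field{E}}}(v)=\Mat{h}\,\SL{2}{\oRing{\Field{E}}}\,\Mat{h}^{-1}$; hence the stabilizer of $v$ in $\G$ is $\{\Mat{g}\in\SL{2}{\Field{F}}:\Mat{h}^{-1}\Mat{g}\Mat{h}\in M_{2}(\oRing{\Field{E}})\}$. Writing $\Mat{g}=\left(\begin{smallmatrix}a&b\\c&d\end{smallmatrix}\right)$ and demanding that the four entries of $\Mat{h}^{-1}\Mat{g}\Mat{h}$ be integral over $\oRing{\Field{E}}$, one gets: $c\in\pIdeal{\Field{F}}^{-n}$; $c\theta+d$ and $a-c\theta$ lie in $\oRing{\Field{E}}$; and $a\theta+b-c\theta^{2}-d\theta\in\pIdeal{\Field{E}}^{\,n\ramInd{\Field{E}/\Field{F}}}$. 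Expanding these in the $\oRing{\Field{F}}$-basis $\{1,\theta\}$ by means of $\theta^{2}=\relTrace{}{\theta}\,\theta-\relNorm{}{\theta}$ converts them into explicit congruences on $a,b,c,d$ modulo powers of $\pIdeal{\Field{F}}$ determined by $n$, $\ramInd{\Field{E}/\Field{F}}$ and $\val_{\Field{E}}(\theta-\galConj{\theta})$, exhibiting $\mathrm{Stab}_{\G}(v)$ as an explicit ``$\theta$-twisted'' congruence subgroup of $\SL{2}{\Field{F}}$.

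Finally I would factor out the torus. Given such a $\Mat{g}$, the pair $(a,c)$ produces a candidate $w_{0}=a+c\theta\in\Field{E}^{\times}$ whose image under the embedding is the ``torus part'' of $\Mat{g}$, and $\det{\Mat{g}}=1$ together with the congruences above force $\relNorm{}{w_{0}}\equiv1$ to a controlled precision; correcting $w_{0}$ to an exact norm-one element $w$ with $w\equiv w_{0}$ then amounts to the relevant $\Units_{\Field{F}}$-class being a norm from $\Units_{\Field{E}}$ to matching precision, and setting $\Mat{t}$ to be the image of $w$ leaves $\Mat{t}^{-1}\Mat{g}$ in the normalized shape. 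When $\Field{E}/\Field{F}$ is unramified the norm is surjective on every unit filtration, so the correction costs nothing, $\Mat{s}=1_{2}$, and the surviving freedom is exactly the block $\Mat{j}$; here $m=\min(\floor{\frac{n}{2}},\ramInd{})$ appears because $\relNorm{}{1+\pElem_{\Field{E}}^{k}u}=1+\relTrace{}{\pElem_{\Field{E}}^{k}u}+\relNorm{}{\pElem_{\Field{E}}^{k}u}$ has a trace term that only contributes once $k$ exceeds $\ramInd{}=\val_{\Field{F}}(2)$, while the norm term forces $k\leq\floor{\frac{n}{2}}$. When $\Field{E}/\Field{F}$ is ramified the norm from $\Units_{\Field{E}}$ fails to reach the needed $\Units_{\Field{F}}$-classes; the missing ones are governed by the different — whence $t=\min(n,\val_{\Field{E}}(\diff{\pElem_{\Field{E}}})-1)$, $\partial$, and the offset $\varepsilon=\ramInd{\Field{E}/\Field{F}}-1$ — and they are carried by the extra factor $\Mat{s}=\left(\begin{smallmatrix}1&0\\0&z\end{smallmatrix}\right)(x1_{2}+y\matP_{\Field{E}})$, the scalar $z=\relNorm{}{x+y\pElem_{\Field{E}}}^{-1}$ being forced by $\det{\Mat{s}}=1$ and the stated ranges for $x,y$ recording exactly which classes are missed. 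The reverse inclusion follows by substituting a general $\Mat{t}\,\Mat{j}\,\Mat{s}$ back into the congruences of the previous step, using $\det{\Mat{j}}=1$ and $z\,\relNorm{}{x+y\pElem_{\Field{E}}}=1$. The hard part is this last piece of book-keeping in the wildly ramified case: at $p=2$ the trace and norm on the unit filtrations of $\Field{E}/\Field{F}$ are controlled by $\val_{\Field{E}}(\diff{\pElem_{\Field{E}}})$ and $\val_{\Field{F}}(2)=\ramInd{}$ in a way that must be followed exponent by exponent, and it is there that $m$, $t$, $\varepsilon$, $\partial$ and the precise shape of $\Mat{s}$ get pinned down; the unramified case and the computations of the first two steps are comparatively mechanical, and for the ancillary facts about $\Tree_{\Field{F}}\hookrightarrow\Tree_{\Field{E}}$ and about norms on unit filtrations I would lean on the appendix.
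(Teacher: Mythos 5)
Your outline is sound and it reaches the theorem by a genuinely different route for the hard half. Where the paper proves the containment $\mathrm{Stab}([\qPnt]_{n\ramInd{\Field{E}/\Field{F}}})\subseteq\grp{T}_{\qPnt,n}\grp{J}^{\prime}$ via the explicit parametrized computation of \autoref{lem:Tan} (letting a matrix built from a chosen trace-zero $\trZeroMat$ act on $\Proj{1}{\oRing{\Field{E}}/\pIdeal{\Field{E}}^{n\ramInd{\Field{E}/\Field{F}}}}$, case by case in the different), you instead conjugate the lattice stabilizer, $\mathrm{Stab}_{\G}(v)=\G\cap\Mat{h}\,\SL{2}{\oRing{\Field{E}}}\,\Mat{h}^{-1}$, which after expanding in the $\oRing{\Field{F}}$-basis $\{1,\theta\}$ exhibits the stabilizer in one stroke as the congruence group $a-d-c\relTrace{}{\theta}\equiv b+c\relNorm{}{\theta}\equiv 0\mod{\pIdeal{\Field{F}}^{n}}$ inside $\SL{2}{\oRing{\Field{F}}}$. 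That is cleaner, uniform in the different, and it subsumes both \autoref{lem:J} (your $\Mat{j}$ block is read off from the congruences rather than from a separately verified ball stabilizer) and \autoref{lem:Tan}. From there the two arguments coincide: your element $w_{0}$ congruent to $\Mat{g}$ (which should be $d+c\theta$, not $a+c\theta$, in your coordinates) satisfies $\relNorm{}{w_{0}}\equiv 1\mod{\pIdeal{\Field{F}}^{n}}$, and correcting it to an exact norm-one element is governed by $\mathrm{N}^{-1}(\Units_{\Field{F}}^{n})=\NormOne\Units_{\Field{E}}^{2n-t}$ together with the parity statement for $\NormOne\cap\Units_{\Field{E}}^{k}$ --- precisely \autoref{lem:LF} and \autoref{lem:Cslmn}, which is also where the paper pins down $t$, $\partial$ and the ranges of $x$ and $y$. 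Two cautions. First, that exponent-by-exponent step is where the theorem's constants actually live and your proposal defers it; a complete write-up must carry it out, not merely cite its location. Second, a quadratic point is a Galois orbit of vertices, so you should either verify that no element of $\G$ swaps $v$ and $\galConj{v}$ in the relevant cases or explicitly account for the extra coset; your appeal to $N_{\G}(\grp{T}_{\qPnt})$ needs to be made precise (the paper itself silently computes the stabilizer of a single vertex, so this is a point where your proof could improve on the original rather than inherit its gap).
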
%
\paragraph{Filtrations of $\grp{K}$ and $\grp{I}$.} In addition to the notation above I will also make use of a filtrations on $\grp{K}$ constructed as follows.  For subgroups $\grp{A}$ and $\grp{B}$ of $\grp{G}$, $\grp{A}\ast\grp{B}$ is the smallest subgroup of $\grp{G}$ containing $\grp{A}$ and $\grp{B}$.  Denote by $\phi_n$ the natural map from $\oRing{}$ to $\oRing{}/\pIdeal{}^n$ and denote the induced map,   

\begin{equation}
\varphi_n:\SL{2}{\oRing{\Field{F}}}\rightarrow\SL{2}{\oRing{\Field{F}}/\pIdeal{\Field{F}}^n}.
\end{equation}

 With $n$ a non-negative integer, set $m=\min(\floor{\frac{n}{2}},\ramInd{\Field{F}})$.  Let $\varepsilon\in\{0,1\}$.  Define $\grp{K}_{n}:=\ker(\varphi_n)$, $\grp{I}_{2n}:=\K{n}\cap\conj{\K{n}}{\matP}$,  $\grp{I}_{2n+1}:=\K{n+1}\ast\conj{\K{n+1}}{\matP}$, $\grp{J}_{0}:=\grp{K}$,
\begin{equation}
\begin{split}
&\grp{J}_{n}:=\left\{\left(\begin{smallmatrix}u_1&b\\c&u_2
\end{smallmatrix}\right):u_1,u_2\in\Units^{n-m}_{\Field{F}}; b,c\in\pIdeal{\Field{F}}^n; u_1u_2-bc=1 \right\}\text{, if $n>0$, and }\\
&\J{2n+\varepsilon}^{\mathtt{r}}:=\left\{\left(\begin{smallmatrix}u_1&b\\c\pElem_{\Field{F}}&u_2
\end{smallmatrix}\right):u_1,u_2\in\Units^{n-m+\varepsilon}_{\Field{F}}; b,c\in\pIdeal{\Field{F}}^{n}; u_1u_2-bc\pElem_{\Field{F}}=1 \right\}.
\end{split}
\end{equation}
One can quickly observe that  $\J{2n}^{\mathtt{r}}=\J{n}\cap\conj{\J{n}}{\matP}$, that $\J{2n+1}^{\mathtt{r}}=\J{n+1}\ast\conj{\J{n+1}}{\matP}$, and, if $n>1$, $\J{n}\subsetneq\K{n}$ and $\J{n+1}^{\mathtt{r}}\subsetneq\I{n+1}$.

\paragraph{The action of $\grp{G}$ on $\Tree_{\Field{E}}$.}
The vertices of $\Tree_{\Field{F}}$ and hence their image in $\Tree_{\Field{E}}$ are in bijection with the maximal compact subgroups of $\grp{K}$ which stabilize them.  With respect to $\grp{G}$ these form two orbits, represented by $\grp{K}$ and $\conj{\grp{K}}{\matP}$.  It will be useful to coordinatize $\Tree_\Field{E}$ with respect to the $\grp{K}$-stabilized vertex.\\
\indent Let $\smVertex{0}{0}{0}$ be the vertex of $\Tree_{\Field{E}}$ which is stabilized by $\grp{K}(\Field{F})$ (and by $\grp{K}({\Field{E}})$).\\
\indent Let $\smVertex{x}{y}{n}$ be the vertex of $\Tree_{\Field{E}}$ which is $n$ edges away from $\smVertex{0}{0}{0}$ and corresponds to $\smVertex{x}{y}{}\in\Proj{1}{\Ring{R}}$ for $\Ring{R}=\oRing{\Field{E}}/\pIdeal{\Field{E}}^{n}$. The natural action of $\grp{K}(\Field{E})$ on each such $\Proj{1}{\Ring{R}}$ corresponds to that on such vertices in $\Tree_{\Field{E}}$.  Extending this correspondence $\conj{\grp{K}}{\matP}$ will stabilize $\smVertex{0}{1}{\ramInd{\Field{E}/\Field{F}}}$, however its action on $\Tree_{\Field{E}}$ will not stabilize the set of vertices corresponding to the set $\Proj{1}{\Ring{R}}$, so its action, in this respect, is incompatible with the coordinates chosen nor is the action of any other maximal compact subgroup.  This notation will be used to compute the stabilizer of a quadratic point that is closest to $\smVertex{0}{0}{0}$ if $\Field{E}/\Field{F}$ is unramified or to $\smVertex{0}{1}{1}$ otherwise.  The stabilizers of other points can then be computed via conjugation by elements of the set $\grp{G}\cup\matP\grp{G}$.\\
\indent Let $\grp{B}_n$  denote the preimage of the upper triangular matrices under $\varphi_n$.  This is the $\grp{K}$-stabilizer of the vertex $\smVertex{0}{1}{n\ramInd{\Field{E}/\Field{F}}}$.
\paragraph{Barbs.}A \emph{barb}\footnote{This terminology is due to Tits. \cite{T1979}}  in $\Tree_{\Field{E}}$ with respect to $\Field{F}$ is a connected set of points in $\Tree_{\Field{E}}\setminus\Tree_{\Field{F}}$ which are fixed by Galois conjugation.  Using the above coordinatization of $\Tree_{\Field{E}}$ it is clear that the diameter of barbs is $\ramInd{\Field{E}}-1$ unless there is a trace-zero l.u.p. in $\Field{E}$, in which case it is $\ramInd{\Field{E}}$.  In the case that $\Field{E}/\Field{F}$ is wildly ramified, the Galois-fixed subtree is strictly larger than the image of $\Tree_{\Field{F}}$ and the in the terminology of Tits \cite[page 47 subsection 2.6]{T1979}, the tree is ``covered in barbs'', however, by considering orbits of verteces, the following calculations will be unaffected by this phenomenon.

\begin{lemma}\label{lem:J} For $n\geq 0$, under the $\grp{K}_{n-m}$-action on $\Tree_{\Field{E}}$, 
the point-wise stabilizer of the ball centered at $\smVertex{0}{0}{0}$ of radius $n\ramInd{\Field{E}/\Field{F}}$ is
$\grp{J}_{n}$ and the point-wise stabilizer of the ball centered at $\smVertex{0}{1}{1}$ of radius $(n\ramInd{\Field{E}/\Field{F}}+1)$ is $\J{2n+1}^{\mathtt{r}}$ if $\Field{E}/\Field{F}$ is ramified and $\conj{\J{n+1}}{\matP}$ otherwise. \end{lemma}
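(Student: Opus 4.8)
The plan is to reduce both statements to explicit computations on the spheres of $\Tree_\Field{E}$ about $\smVertex{0}{0}{0}$, exploiting that $\grp{K}_{n-m}(\Field{F})\subseteq\grp{K}(\Field{F})\subseteq\grp{K}(\Field{E})$ fixes $\smVertex{0}{0}{0}$ and hence acts on each sphere through the coordinatization by $\Proj{1}{\oRing{\Field{E}}/\pIdeal{\Field{E}}^k}$. Two elementary tree facts are used throughout: in a tree the pointwise stabilizer of a ball equals that of its bounding sphere (a vertex at distance $<r$ from the centre lies on a geodesic to some vertex at distance exactly $r$); and for $k\ge 1$ a matrix in $\SL{2}{\oRing{\Field{E}}}$ fixes every point of $\Proj{1}{\oRing{\Field{E}}/\pIdeal{\Field{E}}^k}$ precisely when it is scalar modulo $\pIdeal{\Field{E}}^k$, which is read off by testing against $[1:0]$, $[0:1]$ and $[1:1]$. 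Write $N=n\ramInd{\Field{E}/\Field{F}}$, so $\pIdeal{\Field{F}}^n\oRing{\Field{E}}=\pIdeal{\Field{E}}^N$ and $\val_{\Field{E}}|_{\Field{F}}=\ramInd{\Field{E}/\Field{F}}\,\val_{\Field{F}}$.

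For the first assertion, the bounding sphere of the radius-$N$ ball about $\smVertex{0}{0}{0}$ is all of $\Proj{1}{\oRing{\Field{E}}/\pIdeal{\Field{E}}^N}$, so $g\in\grp{K}_{n-m}(\Field{F})$ stabilizes the ball iff $g$ is scalar modulo $\pIdeal{\Field{E}}^N$; as $g$ has entries in $\oRing{\Field{F}}$ this says its off-diagonal entries lie in $\pIdeal{\Field{E}}^N\cap\oRing{\Field{F}}=\pIdeal{\Field{F}}^n$ and its diagonal entries are congruent modulo $\pIdeal{\Field{F}}^n$, which together with $g\in\grp{K}_{n-m}$ and $\det{g}=1$ is the definition of $\grp{J}_n$. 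Conversely a member of $\grp{J}_n$ is scalar modulo $\pIdeal{\Field{F}}^n$ — here one needs that $\det{g}=1$ forces its two diagonal units to be congruent modulo $\pIdeal{\Field{F}}^n$, which follows from $2m\le n$ and $\val_{\Field{F}}(2)=\ramInd{\Field{F}}\ge m$ — hence fixes the ball.

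For the second assertion, the radius-$(N+1)$ ball about $\smVertex{0}{1}{1}$ contains the radius-$N$ ball about $\smVertex{0}{0}{0}$, so any stabilizing element of $\grp{K}_{n-m}(\Field{F})$ already lies in $\grp{J}_n$; its remaining vertices are exactly the descendants of $\smVertex{0}{1}{1}$ at tree-levels $N+1$ and $N+2$, and fixing level $N+2$ subsumes level $N+1$. Parametrizing the level-$(N+2)$ descendants as the points of $\Proj{1}{\oRing{\Field{E}}/\pIdeal{\Field{E}}^{N+2}}$ of the form $[\pElem_\Field{E}\sigma:1]$ and writing $g\cdot[\pElem_\Field{E}\sigma:1]=[\pElem_\Field{E}\sigma:1]$ as a congruence in $\sigma$ modulo $\pIdeal{\Field{E}}^{N+2}$, the term quadratic in $\sigma$ is killed by the congruences $g$ already satisfies (its coefficient is, up to a unit, an off-diagonal entry, already in $\pIdeal{\Field{E}}^N$), so taking $\sigma=0$ and then $\sigma$ a unit the condition collapses to: one off-diagonal entry lies in $\pIdeal{\Field{E}}^{N+2}$ and the difference of the diagonal entries lies in $\pIdeal{\Field{E}}^{N+1}$. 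Translating back to $\Field{F}$ and separating the unramified and ramified cases produces a group of the shape of $\conj{\grp{J}_{n+1}}{\matP}$, respectively $\J{2n+1}^{\mathtt{r}}$. The reverse inclusion is easy: in the ramified case $\J{2n+1}^{\mathtt{r}}=\grp{J}_{n+1}\ast\conj{\grp{J}_{n+1}}{\matP}$, and the two generating groups fix the radius-$((n+1)\ramInd{\Field{E}/\Field{F}})$ balls about $\smVertex{0}{0}{0}$ resp.\ $\smVertex{0}{1}{\ramInd{\Field{E}/\Field{F}}}=\matP\smVertex{0}{0}{0}$, both of which contain the ball in question; in the unramified case that ball is just the $\matP$-translate of the radius-$(n+1)$ ball about $\smVertex{0}{0}{0}$.

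The main obstacle is the final step in each half: showing the group produced by the congruence computation is \emph{exactly} $\grp{J}_n$, respectively $\conj{\grp{J}_{n+1}}{\matP}$ or $\J{2n+1}^{\mathtt{r}}$, and not merely of the same broad shape. This is where the diagonal filtration levels must be matched — the $\Proj{1}$-computation naturally delivers diagonal entries in $\Units_\Field{F}^{n-m}$ together with a sharper congruence between them, whereas the definitions are written with a deeper filtration on the diagonal — and the reconciliation is purely $2$-adic: from $\det{g}=1$ one has $ad\equiv 1$ to a high power of $\pIdeal{\Field{F}}$, so with $a\equiv d$ one gets $a^2\equiv 1$, hence $(a-1)(a+1)\in\pIdeal{\Field{F}}^{\,\bullet}$, and $\val_{\Field{F}}(2)=\ramInd{\Field{F}}$ together with $m=\min(\floor{n/2},\ramInd{\Field{F}})$ is precisely what upgrades this to the stated filtration. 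I would isolate this as a short arithmetic lemma, proved once and used in both halves; for odd residual characteristic it is vacuous, which is exactly why the even case is the substantive one. The only remaining bookkeeping is that conjugation by $\matP$ interchanges and rescales the two off-diagonal slots by $\pElem_\Field{F}^{\pm1}$, matching the asymmetry between the $\pIdeal{\Field{F}}^{n}$ and $\pIdeal{\Field{F}}^{n+2}$ constraints on them, so that in the unramified case the identification descends from the first assertion by $\matP$-conjugation, while in the ramified case one compares directly with the defining presentation of $\J{2n+1}^{\mathtt{r}}$.
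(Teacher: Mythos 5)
Your proposal is correct and, for the main assertion, follows essentially the same route as the paper: both identify the point-wise stabilizer of the radius-$n\ramInd{\Field{E}/\Field{F}}$ ball with the elements acting trivially on $\Proj{1}{\oRing{\Field{E}}/\pIdeal{\Field{E}}^{n\ramInd{\Field{E}/\Field{F}}}}$, and both hinge on the same $2$-adic congruence --- the paper writes it as $(1+\pElem^{n-m}a)^2\equiv 1\mod{\pIdeal{\Field{F}}^n}$ using $\val_{\Field{F}}(2)=\ramInd{\Field{F}}\geq m$ and $2(n-m)\geq n$, which is precisely the arithmetic lemma you isolate to reconcile the congruence $u_1\equiv u_2\mod{\pIdeal{\Field{F}}^n}$ delivered by the projective computation with the filtration $\Units_{\Field{F}}^{n-m}$ appearing in the definition of $\grp{J}_n$. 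Where you genuinely diverge is the second assertion: the paper disposes of it by writing the radius-$(2n+1)$ ball about $\smVertex{0}{1}{1}$ as the intersection of two radius-$(2n+2)$ balls and invoking $\J{2n+1}^{\mathtt{r}}=\J{n+1}\ast\conj{\J{n+1}}{\matP}$, an argument that on its face only gives containment of the join inside the stabilizer; your direct congruence computation on the level-$(N+2)$ descendants of $\smVertex{0}{1}{1}$ (with the quadratic term in $\sigma$ killed by the conditions already imposed by $\grp{J}_n$) establishes the reverse containment explicitly, at the cost of some bookkeeping with $\pIdeal{\Field{E}}^{k}\cap\oRing{\Field{F}}$ in the ramified versus unramified cases and with the $\matP$-conjugation of the off-diagonal slots. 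Both versions are sound; yours is longer but closes a step the paper leaves to the reader.
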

\begin{proof}
From the observations that $\J{2n+1}^{\mathtt{r}}=\J{n+1}\ast\conj{\J{n+1}}{\matP}$, this can be reduced to the unramified case.  In the ramified case the ball of radius $2n+1$ centered at $\smVertex{0}{1}{1}$ is the intersection of the balls of radius $2n+2$ centered at $\smVertex{0}{0}{0}$ and $\smVertex{0}{1}{2}$. Since that if the image of $\Tree_{\Field{F}}$ enjoys no new vertices in $\Tree_{\Field{E}}$, if $\Field{E}/\Field{F}$ is unramified, $\matP$ maps $\smVertex{0}{0}{0}$ to $\smVertex{0}{1}{1}$, so it suffices to establish that the stabilizer of the ball centered at $\smVertex{0}{0}{0}$ of radius $n\ramInd{\Field{E}/\Field{F}}$ is
$\grp{J}_{n}$.\\
Since $\grp{B}_{n}$ is the stabilizer in $\grp{K}$ of the vertex $\smVertex{0}{1}{n\ramInd{\Field{E}/\Field{F}}}$, it is enough to show that $\grp{J}_{n}$ is the intersection of the $\grp{K}(\Field{E})$ conjugates of $\grp{B}_{n}\cap\grp{K}_{n-m}$.  With $\matW=\left(\begin{smallmatrix}0&1\\-1&0\end{smallmatrix}\right)$, note that  $\grp{J}_n=\conj{\grp{B}_{n}}{\matW}\cap\grp{K}_{n-m}\cap\grp{B}_{n}$,  
where $\conj{\grp{B}_{n}}{\matW}$ is precisely the group of determinant-one matrices of the form
$
\left(\begin{smallmatrix}
\oRing{}^\times&\pIdeal{}^{n}\\
\oRing{}&\oRing{}^\times
\end{smallmatrix}\right).
$
 To show that it is a subset of the other conjugates of $\grp{B}_{n}\cap\grp{K}_{n-m}$ and hence the intersection of all conjugates of $\grp{B}_{n}\cap\grp{K}_{n-m}$,   I compute the the action of $\grp{J}_{n}$ on an arbitrary point of the projective line over $\oRing{\Field{E}}/\pIdeal{\Field{E}}^{n\ramInd{\Field{E}/\Field{F}}}$, other than $\smVertex{0}{1}{n\ramInd{\Field{E}/\Field{F}}}$, illustrating that it is trivial. Since $\ramInd{}, \floor{\frac{n}2} \geq m$,
$\left(\begin{smallmatrix}
1+\pElem^{n-m} a& \pElem^{n}b\\
 \pElem^{n}c & 1+\pElem^{n-m} d\\
\end{smallmatrix}\right)
\smVertex{x}{1}{n\ramInd{\Field{E}/\Field{F}}}\!\!\!\!=
\smVertex{(1+\pElem^{n-m} a)^2x}{1}{n\ramInd{\Field{E}/\Field{F}}}
\!\!\!\!= \smVertex{x}{1}{n\ramInd{\Field{E}/\Field{F}}}.$

\end{proof}
\paragraph{Kernels.} Let $\NormOne$ represent the kernel of $\mathrm{N}$.\\
\indent I will make us of the following subgroup of $\GL{2}{\oRing{\Field{F}}}$,
$\grp{K}_{(\!n\!)}\!\!:=\!\ker(\phi_n\!\circ\mathrm{det})$.
 Since $\grp{K}_n\!\unlhd\grp{K}_{(\!n\!)}$, there is an inclusion 
$\iota_n\!\!:\!\grp{K}/\grp{K}_n\!\! \hookrightarrow\grp{K}_{(\!n\!)}/\grp{K}_n$.\\
\indent For $\trZeroMat$, a trace zero matrix with coefficients in $\oRing{\Field{F}}$, define $\grp{T}_{\trZeroMat}$ and $\tilde{\grp{T}}_{\trZeroMat}$  to be the elliptic torus generated by $\trZeroMat$ in $\grp{G}$ and $\GL{2}{\oRing{\Field{F}}}$, respectively.\\ 
\indent Set $\grp{T}_{\trZeroMat, n}:=\ker(\phi_n\circ\mathrm{N}\circ\iota_n\circ\varphi_n)$, where $\mathrm{N}$ is the restriction of determinant in  $\GL{2}{\oRing{\Field{F}}}$ to $\tilde{\grp{T}}_{\trZeroMat}$. \\ 
\indent A noted above, given a conjugate pair of one dimensional vector spaces $\alpha$ in $\Proj{1}{\Field{E}}$ one can consider an of isomorphism of $\Field{E}\oplus\Field{E}$ which enjoy $\alpha$ as eigenspaces and which are defined over $\Field{F}$.   From such an isomorphism another of trace zero, $\trZeroMat$ can be constructed as a linear combination of the original and the identity map.  Since the action on this $\alpha$ by elemets of $\GL{2}{\Field{F}}$ is adjoint to the action on such isomorphisms by conjugation,  the stabilizer of an $\alpha$ can be identified with that of a corresponding $\trZeroMat$.  Through this identification, any $\grp{T}_\alpha$ is isomorphic to some $\grp{T}_\tau$.\\
\indent  I extend the definition $\grp{T}_{\alpha, n}:=\grp{T}_{\trZeroMat, n}$ where the eigenvectors of $\trZeroMat$ are $\alpha$.  Observe that $[\qPnt]_{n\ramInd{\Field{E}/\Field{F}}}$ is fixed by $\grp{T}_{\alpha, n}$.

\paragraph{Quadratic extensions of $\Field{F}$.}  The quadratic extensions, $\Field{E}$, of $\Field{F}$ can be classified by the different (ideal) of each extension.  For the unramified extension, the different is the ring of integers.  For ramified extensions, the different is generated by $\diff{\pElem_{\Field{E}}}=\pElem_{\Field{E}}-\galConj{\pElem_{\Field{E}}}$ \cite[Chapter 3]{N1999}.  The most extreme, non-trivial different is therefore $\pIdeal{\Field{E}}^{2\ramInd{}+1}$.  In that case, $\Field{E}$ is the splitting field for  $x^2-\pElem_{\Field{F}}$ for some choice of $\pElem_{\Field{F}}$.   Otherwise, $\Field{E}$ is the splitting field of $x^2+ax+b\pElem_{\Field{F}}$, $b\in\Units_{\Field{F}}$, and $\val_{\Field{F}}(a)=\val_{\Field{E}}(\diff{\pElem_{\Field{E}}})/2=d\leq\ramInd{}$.  These claims are stated as \autoref{lem:Eisnstn} and proven in the appendix.\footnote{For a given value of $\val_{\Field{E}}({\diff{\pElem_{\Field{E}}}})$ the extensions can be enumerated using Artin-Schreier theory and have been so in an unpublished paper by Wan and Moreno , \cite{WM2005}.}  Those extensions without extremal different therefor split the polynomial $x^2-(a^2-4b\pElem_{\Field{F}})\pElem_{\Field{F}}^{-2d}$, the roots of which are contained in $\Units_{\Field{E}}^{2(\ramInd{}-d)+1}$.%
\begin{lemma}\label{lem:LF}\begin{inparaenum}[\upshape(\itshape a\upshape)]\item
If $\Field{E}/\Field{F}$ is a ramified, quadratic extension  then $\mathrm{N}^{-1}(\Units_{\Field{F}}^n)=\NormOne\Units_{\Field{E}}^{2n-t}$ where $t=\min(n,\val_{\Field{E}}(\diff{\pElem_{\Field{E}}})-1)$ and\\
\item  if $\Field{E}/\Field{F}$ is an unramified, quadratic extension then $\mathrm{N}^{-1}(\Units_{\Field{F}}^n)=\NormOne\Units_{\Field{E}}^{n}$.
\end{inparaenum}
\end{lemma}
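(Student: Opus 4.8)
The plan is to derive both parts from the single identity $\mathrm{N}(\Units_E^{\psi})=\mathrm{N}(\Units_E)\cap\Units_F^n$, where $\psi=\psi(n)$ abbreviates $2n-t$ in case~(a) and $n$ in case~(b). Granting it, if $\mathrm{N}(z)\in\Units_F^n$ then $\mathrm{N}(z)\in\mathrm{N}(\Units_E)\cap\Units_F^n=\mathrm{N}(\Units_E^{\psi})$, so $\mathrm{N}(z)=\mathrm{N}(v)$ with $v\in\Units_E^{\psi}$, whence $zv^{-1}\in\NormOne$ and $z\in\NormOne\Units_E^{\psi}$; conversely $\mathrm{N}(\NormOne\Units_E^{\psi})=\mathrm{N}(\Units_E^{\psi})\subseteq\Units_F^n$. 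So I would prove separately that $\mathrm{N}(\Units_E^{\psi})\subseteq\Units_F^n$ and that $\mathrm{N}(\Units_E)\cap\Units_F^n\subseteq\mathrm{N}(\Units_E^{\psi})$. The tools are the identity $\mathrm{N}(1+x)=1+\mathrm{Tr}(x)+\mathrm{N}(x)$, the formula $\val_F(\mathrm{N}(x))=\tfrac{2}{\ramInd{E/F}}\val_E(x)$, and the different estimate $\mathrm{Tr}(\pIdeal{E}^j)=\pIdeal{F}^{\floor{(j+d)/\ramInd{E/F}}}$ with $d=\val_E(\diff{\pElem_E})$ (and $d=0$ in the unramified case).

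For case~(b) I would take $\pElem_E=\pElem_F$; then for $m\ge1$ and $x\in\pIdeal{F}^m\oRing{E}$ one has $\val_F(\mathrm{N}(x))\ge 2m\ge m+1$, so $\mathrm{N}(1+\pElem_F^m a)\equiv 1+\pElem_F^m\mathrm{Tr}(a)\pmod{\pIdeal{F}^{m+1}}$ and the induced map $\Units_E^m/\Units_E^{m+1}\to\Units_F^m/\Units_F^{m+1}$ is the residue-field trace, which is surjective since $\oRing{E}/\pIdeal{E}$ is separable over $\oRing{F}/\pIdeal{F}$; the residue-field norm is likewise surjective on unit groups. Successive approximation then gives $\mathrm{N}(\Units_E^m)=\Units_F^m$ for all $m\ge0$, which is the desired identity.

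For case~(a), set $b:=d-1\ge1$, so $\psi(n)=n$ for $n\le b$ and $\psi(n)=2n-b$ for $n\ge b$. The inclusion $\mathrm{N}(\Units_E^{\psi})\subseteq\Units_F^n$ follows by writing $1+x\in\Units_E^{\psi}$, noting $\val_F(\mathrm{N}(x))\ge\psi$ and $\val_F(\mathrm{Tr}(x))\ge\floor{(\psi+d)/2}$, and checking in the cases $n\le b$ and $n\ge b$ that both exponents are $\ge n$. For the reverse inclusion I would study $\mathrm{N}$ on the graded quotients $\Units_E^m/\Units_E^{m+1}$: for $1\le m<b$ the trace term is strictly deeper, so $\mathrm{N}(1+\pElem_E^m a)\equiv 1+\overline{w_m}\,\overline{a}^2\pElem_F^m\pmod{\pIdeal{F}^{m+1}}$ with $w_m:=\mathrm{N}(\pElem_E)^m\pElem_F^{-m}\in\Units_F$ (using $\overline{\mathrm{N}(a)}=\overline{a}^2$, as conjugation is trivial on the residue field), and $\overline{a}\mapsto\overline{w_m}\,\overline{a}^2$ is a bijection of the (perfect, characteristic~$2$) residue field --- this is the one place residual characteristic~$2$ is essential; for $m>b$ the norm term is strictly deeper and the graded map is induced by $\mathrm{Tr}$, hence onto the corresponding graded quotient of $\Units_F$ by the defining property of $\diff{\pElem_E}$. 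I would then run successive approximation starting from level $\psi(n)$: the slope of $\psi$ is $1$ below the break and $2$ above, which is exactly the matching of $E$-levels to $F$-levels produced by those graded maps, so one reaches $\mathrm{N}(\Units_E^{\psi(n)})=\mathrm{N}(\Units_E)\cap\Units_F^n$. Above the break consecutive $\Units_E$-levels genuinely collapse under $\mathrm{N}$, but $\NormOne$ meets $\Units_E^m\setminus\Units_E^{m+1}$ precisely at those collapsed levels, so $\NormOne\Units_E^{\psi(n)}$ does not depend on which representative of the collapsed range is chosen.

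I expect the main obstacle to lie in case~(a) exactly at the break $m=b=d-1$, where the trace and norm contributions to $\mathrm{N}(1+\pElem_E^m a)-1$ sit at the same depth and partially cancel; there the ideal-level estimate on $\mathrm{Tr}$ is not enough, and one needs to control $\val_F(\mathrm{Tr}(\pElem_E^m a))$ for unit $a$, which is sensitive to the parity of $m$ and to whether $E/F$ is ``extremal'', i.e.\ whether $E$ has a trace-zero uniformizer and $d=2\ramInd{}+1$ (cf.\ \autoref{lem:Eisnstn}). I would organise this by first proving two sublemmas --- one expressing $\val_F(\mathrm{N}(z)-1)$ as a piecewise-linear function of $\val_E(z-1)$ with break at $b$, i.e.\ pinning down $\mathrm{N}(\Units_E^m)$ for every $m$, and one expressing $\val_E(\galConj{u}/u-1)$ in terms of $\val_E(u-1)$, i.e.\ describing $\NormOne\cap\Units_E^m$ via Hilbert's Theorem~90 --- and then combining them; alternatively, for an index-counting proof, the indices $[\Units_F:\mathrm{N}(\Units_E)\Units_F^n]$ and $[\NormOne:\NormOne\cap\Units_E^{\psi}]$ can be read off from local class field theory, from the conductor $\pIdeal{F}^d$ of $E/F$ and the ramification filtration of the norm-one torus of $E/F$ respectively.
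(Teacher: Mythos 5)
Your plan is mathematically sound, but it is worth knowing that the paper's entire proof of this lemma is a two-line citation of Serre, \emph{Local Fields}, Chapter V, \S 3, Propositions 3 and 5 --- which are precisely the statements you set out to reprove. What you describe is the standard graded-quotient analysis of the norm on the unit filtration of a cyclic extension of prime degree, organised around the Herbrand function with its single break at $b=\val_{\Field{E}}(\diff{\pElem_{\Field{E}}})-1$ (note $2n-\min(n,b)$ is exactly $\psi(n)$), so as a self-contained substitute it is the same mathematics with Serre's black box opened; what it buys is an explicit proof in place of a reference, at the cost of redoing a delicate computation the paper deliberately outsources. Two remarks. First, your ``reduction'' to $\mathrm{N}(\Units_{\Field{E}}^{\psi})=\mathrm{N}(\Units_{\Field{E}})\cap\Units_{\Field{F}}^{n}$ is really a reformulation --- its two inclusions are equivalent to the two inclusions of the lemma --- which is fine as bookkeeping but does not by itself reduce the work. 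Second, one claim is stated too broadly: it is not true that for every $m>b$ the norm term is strictly deeper than the trace term. At $m=b+1$ one has $\val_{\Field{F}}(\mathrm{N}(x))=b+1$ while $\mathrm{Tr}(\pIdeal{\Field{E}}^{b+1})=\pIdeal{\Field{F}}^{b+1}$, so both contributions land at the same $F$-level and that graded map is again of Artin--Schreier type rather than trace-induced. This does not sink the argument, because the successive approximation above the break only passes through the levels $\psi(n)=2n-b$ with $n>b$, i.e.\ even offsets from the break, where the trace genuinely dominates; but the sublemma you propose (pinning down $\mathrm{N}(\Units_{\Field{E}}^{m})$ for \emph{every} $m$) would have to treat the odd offsets, and in particular $m=b+1$, separately. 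With that caveat, and granting the Hilbert~90 description of $\NormOne\cap\Units_{\Field{E}}^{m}$ (which is the content of the paper's \autoref{lem:Cslmn}, also cited from Serre), the plan is correct.
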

\begin{proof} The ramified case follows from Chapter V, \S 3 , Proposition 5 of  \cite{S1979} and the unramified from Proposition 3 ({\it loc. cit.}).\end{proof}
The following lemma also follows from Propositions 5 and 3.
\begin{lemma}[Casselman, \cite{C1972}]\label{lem:Cslmn}\footnote{I have omitted part \upshape(\itshape c\upshape) of this lemma as I do not employ it in this paper.}
\begin{inparaenum}[\upshape(\itshape a\upshape)]\item If $\Field{E}/\Field{F}$ is ramified, $\NormOne\subseteq \Units_{\Field{E}}^{\val_{\Field{E}}(\delta( \pElem_{\Field{E}}))-1}$, and $\NormOne/\NormOne\cap\Units_{\Field{E}}^{\val_{\Field{E}}(\delta( \pElem_{\Field{E}}))}$ is a group of order 2.\\
\item Also if $\Field{E}/\Field{F}$ is ramified, 
\begin{equation*} 
\NormOne\cap\Units_{\Field{E}}^{\val_{\Field{E}}(\delta( \pElem_{\Field{E}}))+2n-1}=
\NormOne\cap\Units_{\Field{E}}^{\val_{\Field{E}}(\delta( \pElem_{\Field{E}}))+2n},\quad n\geq1. \end{equation*}
\end{inparaenum}
\end{lemma}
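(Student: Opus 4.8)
Write $d:=\val_{\Field{E}}(\diff{\pElem_{\Field{E}}})$, set $N_k:=\NormOne\cap\Units_{\Field{E}}^{k}$, and let $q$ be the cardinality of the residue field, which is common to $\Field{F}$ and $\Field{E}$ as $\Field{E}/\Field{F}$ is totally ramified. Restated, the two claims are that $N_{d-1}=\NormOne$ with $[N_{d-1}:N_{d}]=2$, and that $N_{d+2n-1}=N_{d+2n}$ for all $n\geq1$; the plan is to locate the jumps of $\{N_k\}$ inside $\{\Units_{\Field{E}}^{k}\}$ by reducing the equation $u\galConj{u}=1$ to successive residue levels. The inputs I would use — besides the characteristic being $2$ and the triviality of conjugation on the residue field — are $\relTrace{}{\pIdeal{\Field{E}}^{j}}=\pIdeal{\Field{F}}^{\floor{(j+d)/2}}$ (different theory, \cite[Ch.~3]{N1999}) and $\val_{\Field{F}}(\relNorm{}{x})=\val_{\Field{E}}(x)$ for $x\in\Field{E}^{\times}$.

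First, $\NormOne\subseteq\Units_{\Field{E}}^{d-1}$: if $u=1+x\in\NormOne$ with $x\neq0$, then $u\galConj{u}=1+\relTrace{}{x}+\relNorm{}{x}=1$ forces $\relTrace{}{x}=-\relNorm{}{x}$, so $\val_{\Field{E}}(x)=\val_{\Field{F}}(\relNorm{}{x})=\val_{\Field{F}}(\relTrace{}{x})\geq\floor{(\val_{\Field{E}}(x)+d)/2}\geq(\val_{\Field{E}}(x)+d-1)/2$, whence $\val_{\Field{E}}(x)\geq d-1$; thus $N_k=\NormOne$ for every $k\leq d-1$, which is half of (a).

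For the rest, I would write a general element of $N_k\setminus N_{k+1}$ as $u=1+\pElem_{\Field{E}}^{k}v$ with $v\in\oRing{\Field{E}}^{\times}$ and reduce $\relTrace{}{\pElem_{\Field{E}}^{k}v}=-\relNorm{}{\pElem_{\Field{E}}^{k}v}$ modulo $\pIdeal{\Field{F}}^{\floor{(k+d)/2}+1}$. For the two values of $k$ that matter, namely $k=d-1$ and $k=d+2n-1$ (both with $k+d$ odd), the assignment $v\mapsto\relTrace{}{\pElem_{\Field{E}}^{k}v}$ maps $\oRing{\Field{E}}$ onto $\pIdeal{\Field{F}}^{\floor{(k+d)/2}}$ and $\pIdeal{\Field{E}}$ into $\pIdeal{\Field{F}}^{\floor{(k+d)/2}+1}$, so, being induced by an $\Field{F}$-linear map, it descends to a nonzero, hence bijective, self-map $\bar v\mapsto\lambda\bar v$ of the residue field; while $\relNorm{}{\pElem_{\Field{E}}^{k}v}$ has $\Field{F}$-valuation $k$ with leading residue $c\,\bar v^{\,2}$ for some $c\neq0$, since $\relNorm{}{v}\equiv\bar v^{\,2}$. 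When $k=d-1$ both terms live at level $d-1$, so the congruence reads $\lambda\bar v+c\,\bar v^{\,2}=0$; over a field of characteristic $2$ this factors as $\bar v(\lambda+c\,\bar v)=0$, with exactly the two roots $0$ and $\lambda/c$, the nonzero one realized by $\pElem_{\Field{E}}/\galConj{\pElem_{\Field{E}}}=1+\diff{\pElem_{\Field{E}}}/\galConj{\pElem_{\Field{E}}}$, a norm-one element of $\Units_{\Field{E}}^{d-1}\setminus\Units_{\Field{E}}^{d}$; hence $N_{d-1}/N_{d}$ has order exactly $2$, finishing (a). When $k=d+2n-1$ with $n\geq1$, the norm term has $\Field{F}$-valuation $d+2n-1\geq\floor{(k+d)/2}+1$ and so drops out, leaving $\lambda\bar v=0$ and forcing $\bar v=0$; so $\NormOne$ contains no $u$ with $\val_{\Field{E}}(u-1)=d+2n-1$, i.e.\ $N_{d+2n-1}=N_{d+2n}$, which is (b).

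One could instead argue straight from \autoref{lem:LF}(a) — the same Propositions 5 and 3 of \cite{S1979} — using the exact sequences $1\to N_k/N_{k+1}\to\Units_{\Field{E}}^{k}/\Units_{\Field{E}}^{k+1}\to\relNorm{}{\Units_{\Field{E}}^{k}}/\relNorm{}{\Units_{\Field{E}}^{k+1}}\to1$, which give $q=[N_k:N_{k+1}]\cdot[\relNorm{}{\Units_{\Field{E}}^{k}}:\relNorm{}{\Units_{\Field{E}}^{k+1}}]$, together with the facts that $\relNorm{}{\Units_{\Field{E}}^{k}}$ is determined by $\relNorm{}{\Units_{\Field{E}}^{\mu(n)}}=\relNorm{}{\Units_{\Field{E}}}\cap\Units_{\Field{F}}^{n}$ with $\mu(n)=2n-\min(n,d-1)$, that $[\Units_{\Field{F}}:\relNorm{}{\Units_{\Field{E}}}]=2$, and that the conductor is $\pIdeal{\Field{F}}^{d}$. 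In either approach I expect the one genuinely delicate point to be the behaviour near $k=d-1$ and $k=d$: $\mu$ skips the value $d$ and, correspondingly, $v\mapsto\relTrace{}{\pElem_{\Field{E}}^{d}v}$ fails to factor through the residue field, so a level-by-level count degenerates exactly there — and it is this degeneration that produces the order-$2$ quotient of (a) while sparing the odd offsets $d+1,d+3,\dots$ of (b); verifying that the quotient in (a) really has order $2$, not $1$, is precisely where the explicit unit $\pElem_{\Field{E}}/\galConj{\pElem_{\Field{E}}}$ is needed.
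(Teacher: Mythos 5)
Your argument is correct, but it takes a genuinely different route from the paper: the paper proves this lemma in one line, by citing Propositions 3 and 5 of Chapter V, \S 3 of \cite{S1979} (the same source it uses for \autoref{lem:LF}), whereas you give a self-contained computation. Your key steps all check out: the expansion $\relNorm{}{1+x}=1+\relTrace{}{x}+\relNorm{}{x}$ reduces membership in $\NormOne$ to $\relTrace{}{x}=-\relNorm{}{x}$; the ideal identity $\relTrace{}{\pIdeal{\Field{E}}^{j}}=\pIdeal{\Field{F}}^{\floor{(j+d)/2}}$ is the correct form of the different-theoretic bound for $e=2$; for $k+d$ odd the induced map $\bar v\mapsto\lambda\bar v$ on the residue field is well defined and bijective; and the resulting residue equation $\bar v(\lambda+c\,\bar v)=0$ in characteristic $2$ has exactly two roots, with the nonzero one witnessed by $\pElem_{\Field{E}}/\galConj{\pElem_{\Field{E}}}=1+\diff{\pElem_{\Field{E}}}/\galConj{\pElem_{\Field{E}}}\in\NormOne\cap(\Units_{\Field{E}}^{d-1}\setminus\Units_{\Field{E}}^{d})$ --- this last explicit element is exactly what is needed to get order $2$ rather than $\leq 2$, and the image $\{0,\lambda/c\}$ is a subgroup of the additive residue field precisely because the characteristic is $2$. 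The trade-off: the paper's citation is shorter and situates the statement inside standard local class field theory (your closing paragraph correctly identifies that alternative), while your version makes visible the mechanism of the wild jump --- the degeneration of the trace pairing at levels $k=d-1,d$ --- and would let a reader verify the lemma without consulting \cite{S1979} or \cite{C1972}.
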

\paragraph{Proof of Theorem.}  Since the statement of the theorem is up to conjugation, I can assume without loss of genreality, that the quadratic point in question is closest of the points in $\Tree_{\Field{F}}$ to either $\smVertex{0}{0}{0}$  or $\smVertex{0}{1}{\ramInd{\Field{E}/\Field{F}}}$, and that the distance between it an those points is $n$.  If $\ramInd{\Field{E}/\Field{F}}=1$, and the point is closer to $\smVertex{0}{1}{1}$ conjugation by $\matP$ reduces the problem to the the case closest to $\smVertex{0}{0}{0}$.  As a result I can further assume $[\qPnt]_{\bullet}= [\qPnt]_{n}$.\\
\indent Let $\grp{J}^\prime$ equal $\grp{J}_n$ if $\ramInd{\Field{E}/\Field{F}}=1$ and $\grp{J}_{2n-1}^{\mathtt{r}}$ otherwise. Observe that if $\Field{E}$ is ramified that conjugation by $\matP_{\Field{E}}$ stabilizes $\smVertex{0}{1}{1}$ and, consequently, any stabilizer in $\GL{2}{\Field{F}}$ of quadratic points closest to $\smVertex{0}{1}{1}$ are congruent to a linear combination of $\matP_{\Field{E}}$ and $1_2$ modulo $\grp{J}^\prime$.\\
\indent Since $\grp{T}_{\qPnt,n}$ and $\grp{J}^\prime$ stabilize  $[\qPnt]_{n}$ I compute the possible coefficients of $\grp{T}_{\qPnt,n}\grp{J}^\prime$.  By \autoref{lem:LF} part \upshape(\itshape a\upshape), elements of $\grp{T}_{\qPnt,n}$ are of the form $\Mat{t}\cdot\Mat{k}$ where $\Mat{t}\in\grp{T}_\qPnt$ and $\Mat{k}=\left(\begin{smallmatrix}1&0\\0&c\end{smallmatrix}\right)( a1_2+b\Mat{p}_{\Field{E}})$ where $a\in \Units_{\Field{F}}^{n-\ceiling{\frac{t}{2}}}$,  $b \in  \pIdeal{\Field{F}}^{n-\floor{\frac{t}{2}}}$ and $c =\relNorm{}{a+b\pElem_{\Field{E}}}^{-1}$.  By \autoref{lem:Cslmn} if there is no trace-zero l.u.p., then the restriction can be strengthened to $a\in \Units_{\Field{F}}^{n-\ceiling{\frac{t}{2}}+1}$ and if there is, to  $b \in  \pIdeal{\Field{F}}^{n-\floor{\frac{t}{2}}+1}$.  If $a\in\Units_{\Field{F}}^{n-m}$ and if $b\in\pIdeal{\Field{F}}^{n-1}$, then $\left(\begin{smallmatrix}1&0\\0&c\end{smallmatrix}\right)( a1_2+b\Mat{p}_{\Field{E}})\in\grp{J}_{2n-1}^{\mathtt{r}}$.  By \autoref{lem:LF} part \upshape(\itshape b\upshape) if $\Field{E}/\Field{F}$ is unramified, $\grp{T}_{\qPnt, n}\subset\grp{T}_{\qPnt}\grp{J}_{n}$.  As a result, there is a factorization of $\grp{T}_{\qPnt,n}\grp{J}^\prime$ of the form $\Mat{t}\cdot\Mat{j}\cdot\Mat{s}$ with $\Mat{j}\in \grp{J}^{\prime}$ and $\Mat{s}$ as mentioned in the theorem.\\
\indent The theorem has been reduced to showing in the ramified case that the stabilizer of $[\qPnt]_{n\ramInd{\Field{E}/\Field{F}}}$ is contained in $\grp{T}_{\qPnt,n}\grp{J}^{\prime}$.  This is proven below.
\pagebreak[1]
\begin{lemma}\label{lem:Tan}
If no point in $\Tree_{\Field{F}}$ is closer to $[\qPnt]_{n\ramInd{\Field{E}/\Field{F}}}$ than $\smVertex{0}{0}{0}$, the stabilizer of $[\qPnt]_{n\ramInd{\Field{E}/\Field{F}}}$ is contained in $\Torus_{\qPnt, n}\grp{J}^\prime$, where $\grp{J}^\prime$ is $\grp{J}_n$ if $\ramInd{\Field{E}/\Field{F}}=1$ and $\grp{J}_{2n-1}^{\mathtt{r}}$ otherwise. 
\end{lemma}
\begin{proof}
In the argument below I construct a matrix of the form $\Mat{A}=( x1_2+y\trZeroMat+z\left(\begin{smallmatrix}0&0\\1&0\end{smallmatrix}\right))\in\grp{G}$ where $x,y\in\Field{F}$ and $\trZeroMat$ is a trace-zero matrix with eigenvalues $\pm\qElem\in\oRing{\Field{E}}\setminus\oRing{\Field{F}}$ and eigenvectors which represent $[\qPnt]_{n\ramInd{\Field{E}/\Field{F}}}$.  Assuming $\Mat{A}$ stabilizes I then calculate the restrictions on the coefficients $x$, $y$ and $z$ in terms of $x^2+y^2|\trZeroMat|$.\\  
\indent First I will handle when $\val_{\Field{E}}(\diff{\pElem_{\Field{E}}})=\ramInd{\Field{E}}+1$. Choose a local uniformizing parameter, $\eElem:=\pElem_{\Field{E}}$, such that, $\relTrace{\Field{E}/\Field{F}}{\pElem_{\Field{E}}}=0$.  Set $\pElem:=\pElem_{\Field{F}}=\pElem_{\Field{E}}^2$ and $\trZeroMat:=\matP_{\Field{E}}$.\\
\indent Assume $\Mat{A}:=\left(\begin{smallmatrix}u&x\\ (x+y)\pElem&\frac{1+x(x+y)\pElem)}{u}\end{smallmatrix}\right)$ stabilizes $\left[\begin{smallmatrix}1\\ \eElem\end{smallmatrix}\right]_{n\ramInd{\Field{E}/\Field{F}}}$.
\begin{equation}
\Mat{A}\left[\begin{smallmatrix}1\\\eElem\end{smallmatrix}\right]=\left(\begin{smallmatrix}u&x\\(x+y)\pElem&(1+x(x+y)\pElem)/u\end{smallmatrix}\right)\left[\begin{smallmatrix}1\\\eElem\end{smallmatrix}\right]
=\left[\begin{smallmatrix}1\\ \left((x+y)\pElem+(1+x(x+y)\pElem)\eElem/u\right)(u+x\eElem)\end{smallmatrix}\right]
\end{equation}
Setting $N:=u^2-x^2\pElem$, I compute:
\begin{equation}
\begin{split}
\bigl((x+y)\pElem&+u^{-1}(1+x(x+y)\pElem)\eElem\bigr)(u+x\eElem)\\
 &=\left((x+y)\pElem+u^{-1}(1+x(x+y)\pElem)\eElem\right)(u-x\eElem)N^{-1}\\
&= ux\pElem N^{-1}+uy\pElem N^{-1}+\eElem N^{-1}- u^{-1}x(1+x(x+y)\pElem)\pElem N^{-1}\\
&= y(u-u^{-1}x^2)\pElem N^{-1}+x(u- u^{-1}(1+x^2\pElem))\pElem N^{-1}+\eElem N^{-1}, \\
\end{split}
\end{equation}

so by the hypothesis $N\equiv1\mod{ \pIdeal{}^n}$ and $u\Id{2}+x\matP_{\Field{E}}\in\grp{T}_{\trZeroMat}^{(n)}$.

\begin{equation}
\begin{split}
 y(u-u^{-1}x^2)\pElem N^{-1}&+x(u- u^{-1}(1+x^2\pElem))\pElem N^{-1}+\eElem N^{-1} \\
 &\equiv y(u-u^{-1}x^2)\pElem+\eElem \quad \mod{ \pIdeal{}^n},
\end{split}
\end{equation}
since $\Torus_{\alpha,n}$ stabilizes $\smVertex{1}{\eElem}{n}$.  As $(u-u^{-1}x^2)$ is a unit, $y\pElem \equiv 0 \mod{ \pIdeal{}^n}$.  Therefor \linebreak[0] $u\Id{2}+x\matP\equiv\Mat{A}
\mod{\J{2n-1}^{\mathtt{r}}}.$\\
The corresponding calculation for $\val_{\Field{E}}(\diff{\pElem_{\Field{E}}})\leq \ramInd{\Field{E}}$ follows analogously.  
\indent In this situation, select $\eElem:=\pm\sqrt{1-4\pElem^{1-2d}}$ so that $4\pElem^{1-2d}=(1-\eElem)(1+\eElem)$. Here let $\trZeroMat=\left(\begin{smallmatrix}1&-2\pElem^{-d}\\ 2\pElem^{1-d}&-1\end{smallmatrix}\right)$ and $\Mat{A}:=\left(\begin{smallmatrix}x+y&-\frac{2y}{\pElem^d}\\ (z+\frac{2y}{\pElem^d})\pElem&(1-\frac{2y}{\pElem^d}(z+\frac{2y}{\pElem^d})\pElem)(x+y)^{-1}\end{smallmatrix}\right)$.  Assume $\Mat{A}\smVertex{1}{\frac{(\eElem+1)\pElem^d}{2}}{n\ramInd{\Field{E}/\Field{F}}}=\smVertex{1}{\frac{(\eElem+1)\pElem^d}{2}}{n\ramInd{\Field{E}/\Field{F}}}$.  Set $N=x^2+y^2\eElem^2$ and $y^\prime=\frac{2y}{\pElem^d}$.
\begin{equation}
\begin{split}
(x-&y\eElem)^{-1}\bigl((\eElem+1)\frac{\pElem^d}{2}\frac{1-y^\prime(z+y^\prime)\pElem}{x+y}+(z+y^\prime)\pElem\bigr)\\
&=N^{-1}\bigl((x+y+y(\eElem-1))(\eElem+1)\frac{\pElem^d}{2}\frac{1-y^\prime(z+y^\prime)\pElem}{x+y}\\
&\quad\quad\quad\quad\quad+(x-y+y(\eElem+1))(z\pElem+y^\prime\pElem)\bigr)\\
&=N^{-1}\bigl((\eElem+1)\frac{\pElem^d}{2}-y^\prime\pElem\frac{1+y^2-x^2-(y^\prime)^2\pElem}{x+y}\\
&\quad\quad\quad\quad\quad+z\pElem\frac{(y^\prime)^2\pElem +x^2-y^2}{x+y}\bigr)\\
&=N^{-1}\bigl((\eElem+1)\frac{\pElem^d}{2}-y^\prime\pElem\frac{1-N}{x+y}+z\frac{N\pElem}{x+y}\bigr)\\
&\equiv\bigl((\eElem+1)\frac{\pElem^d}{2}+z\frac{\pElem}{x+y}\bigr).
\end{split}
\end{equation}
by similar reasoning as above.  Again  $z\pElem \equiv 0 \mod{ \pIdeal{}^n}$, so $\Mat{A}\in\Torus_{\mElem, n}\grp{J}_{2n-1}^{\mathtt{r}}$.\\
\indent The unramified case proceeds where $\trZeroMat:=\left(\begin{smallmatrix}0& 1\\u&0\end{smallmatrix}\right)$ where $\Field{E}$ is the splitting field of $x^2-u$.  I omit the calculation as it is straightforward.\\
\indent This completes the proof of \autoref{lem:Tan} and consequently the proof of the theorem.\end{proof}

\section*{Appendix}\label{sec:App}
\begin{lemma}\label{lem:Trees}
For a finite extension $\Field{E}/\Field{F}$ there is a topological injection of trees $\Tree_{\Field{F}}\hookrightarrow\Tree_{\Field{E}}$, induced by tensoring lattices in $\Field{F}\times\Field{F}$ with $\oRing{\Field{E}}$ with these properties:
\begin{enumerate}[i.]
\item  As a map of metric spaces under the graph metric, this injection has a constant dilation factor of $\ramInd{\Field{E}/\Field{F}}$,
\item the natural action of $\GL{2}{\Field{F}}$ on $\Tree_{\Field{F}}$ extends to the natural action of $\GL{2}{\Field{F}}$ on $\Tree_{\Field{E}}$ as a subgroup of $\GL{2}{\Field{E}}$, and
\item the Galois action on $\Tree_{\Field{E}}$ is isometric, and stabilizes at least the image $\Tree_{\Field{F}}$.
\end{enumerate}
\end{lemma}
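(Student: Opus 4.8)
The plan is to work throughout with the standard lattice model of the tree. Recall that a vertex of $\Tree_{\Field{F}}$ is a homothety class $[L]$ of an $\oRing{\Field{F}}$-lattice $L\subset\Field{F}^{2}$, that $[L]$ and $[L']$ are adjacent exactly when they have representatives $L\supsetneq L'$ with $L/L'$ of length one, and that $\dist{[L]}{[L']}$ equals $a-b$ for representatives $L\supseteq L'$ with elementary divisors $\pIdeal{\Field{F}}^{a}\supseteq\pIdeal{\Field{F}}^{b}$; in particular one may always choose the \emph{geodesic} representative $L\supseteq L'$ with $L/L'\cong\oRing{\Field{F}}/\pIdeal{\Field{F}}^{k}$ cyclic of length $k=\dist{[L]}{[L']}$. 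The map under consideration is $[L]\mapsto[L\otimes_{\oRing{\Field{F}}}\oRing{\Field{E}}]$, regarding $L\otimes_{\oRing{\Field{F}}}\oRing{\Field{E}}$ as an $\oRing{\Field{E}}$-lattice in $\Field{E}^{2}$; it is well defined on homothety classes since $(\pElem_{\Field{F}}L)\otimes\oRing{\Field{E}}=\pElem_{\Field{F}}(L\otimes\oRing{\Field{E}})$.

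For injectivity and for (i) the one computational input I would establish is that, for any lattice $L$ and any integer $j$, $\pElem_{\Field{E}}^{j}(L\otimes_{\oRing{\Field{F}}}\oRing{\Field{E}})\cap\Field{F}^{2}=\pElem_{\Field{F}}^{\ceiling{j/\ramInd{\Field{E}/\Field{F}}}}L$ (the case $j=0$ giving $(L\otimes\oRing{\Field{E}})\cap\Field{F}^{2}=L$): picking an $\oRing{\Field{F}}$-basis $v_{1},v_{2}$ of $L$, which is also an $\Field{F}$-basis of $\Field{F}^{2}$, an element $av_{1}+bv_{2}$ with $a,b\in\Field{F}$ lies in $\pElem_{\Field{E}}^{j}(L\otimes\oRing{\Field{E}})$ iff $\val_{\Field{E}}(a),\val_{\Field{E}}(b)\geq j$, equivalently $\val_{\Field{F}}(a),\val_{\Field{F}}(b)\geq\ceiling{j/\ramInd{\Field{E}/\Field{F}}}$. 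Hence if $[L\otimes\oRing{\Field{E}}]=[L'\otimes\oRing{\Field{E}}]$, i.e.\ $L\otimes\oRing{\Field{E}}=\pElem_{\Field{E}}^{j}(L'\otimes\oRing{\Field{E}})$ for some $j$, intersecting with $\Field{F}^{2}$ gives $L=\pElem_{\Field{F}}^{\ceiling{j/\ramInd{\Field{E}/\Field{F}}}}L'$, so $[L]=[L']$: the vertex map is injective. For the dilation factor, given $\dist{[L]}{[L']}=k$ take the geodesic representative with $L/L'\cong\oRing{\Field{F}}/\pIdeal{\Field{F}}^{k}$; since $\pIdeal{\Field{F}}\oRing{\Field{E}}=\pIdeal{\Field{E}}^{\ramInd{\Field{E}/\Field{F}}}$,
\[
(L\otimes_{\oRing{\Field{F}}}\oRing{\Field{E}})/(L'\otimes_{\oRing{\Field{F}}}\oRing{\Field{E}})\;\cong\;\oRing{\Field{E}}/\pIdeal{\Field{F}}^{k}\oRing{\Field{E}}\;\cong\;\oRing{\Field{E}}/\pIdeal{\Field{E}}^{k\ramInd{\Field{E}/\Field{F}}},
\]
which is cyclic of length $k\ramInd{\Field{E}/\Field{F}}$, so $\dist{[L\otimes\oRing{\Field{E}}]}{[L'\otimes\oRing{\Field{E}}]}=k\ramInd{\Field{E}/\Field{F}}$ \emph{exactly}. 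Viewing each tree as the geometric realization of its graph with edges of unit length, the vertex map then extends uniquely by subdividing each edge of $\Tree_{\Field{F}}$ into $\ramInd{\Field{E}/\Field{F}}$ unit segments, and because trees are uniquely geodesic, scaling every vertex distance by $\ramInd{\Field{E}/\Field{F}}$ forces this extension to be a similarity of that ratio, in particular a topological injection; this is (i).

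Property (ii) is pure functoriality: every $g\in\GL{2}{\Field{F}}\subseteq\GL{2}{\Field{E}}$ acts $\oRing{\Field{F}}$-linearly on $\Field{F}^{2}$, so $(gL)\otimes\oRing{\Field{E}}=g\cdot(L\otimes\oRing{\Field{E}})$, whence the embedding intertwines the $\GL{2}{\Field{F}}$-action on $\Tree_{\Field{F}}$ with the restriction to $\GL{2}{\Field{F}}$ of the $\GL{2}{\Field{E}}$-action on $\Tree_{\Field{E}}$. For (iii), let $\mathrm{Gal}(\Field{E}/\Field{F})$ act on $\Field{E}^{2}$ coordinatewise; each $\sigma$ fixes $\oRing{\Field{E}}$ and $\pIdeal{\Field{E}}$ setwise, hence sends lattices to lattices, and for $M\supseteq M'$ induces a $\sigma$-semilinear isomorphism $M/M'\to\sigma(M)/\sigma(M')$ taking $\pIdeal{\Field{E}}^{a}M'$ to $\pIdeal{\Field{E}}^{a}\sigma(M')$, so the elementary-divisor type, hence the distance, is preserved and the Galois action is isometric. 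Moreover $L\otimes\oRing{\Field{E}}$ has the $\oRing{\Field{E}}$-basis $v_{1},v_{2}\in\Field{F}^{2}$, which $\sigma$ fixes, so $\sigma(L\otimes\oRing{\Field{E}})=L\otimes\oRing{\Field{E}}$: the image of $\Tree_{\Field{F}}$ is fixed pointwise --- ``at least'', since when $\Field{E}/\Field{F}$ is wildly ramified the Galois-fixed subtree is strictly larger (the barbs of Tits).

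The step I expect to cost the most care is (i). One must verify the distance is multiplied by \emph{exactly} $\ramInd{\Field{E}/\Field{F}}$ rather than merely bounded above or below by it, which is the reason for insisting on geodesic representatives with cyclic quotient; and the homothety-class bookkeeping in the injectivity argument --- where the scalar $j$ need not be divisible by $\ramInd{\Field{E}/\Field{F}}$, producing the ceiling --- must be handled carefully. Everything else is routine once the intersection identity $\pElem_{\Field{E}}^{j}(L\otimes\oRing{\Field{E}})\cap\Field{F}^{2}=\pElem_{\Field{F}}^{\ceiling{j/\ramInd{\Field{E}/\Field{F}}}}L$ and the identity $\pIdeal{\Field{F}}\oRing{\Field{E}}=\pIdeal{\Field{E}}^{\ramInd{\Field{E}/\Field{F}}}$ are in hand.
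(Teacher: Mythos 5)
Your proposal is correct and follows essentially the same route as the paper: embed via $[L]\mapsto[L\otimes_{\oRing{\Field{F}}}\oRing{\Field{E}}]$ and read off the dilation factor from the quotient $\oRing{\Field{E}}/\pIdeal{\Field{F}}^{k}\oRing{\Field{E}}\cong\oRing{\Field{E}}/\pIdeal{\Field{E}}^{k\ramInd{\Field{E}/\Field{F}}}$, with (ii) by functoriality and (iii) by preservation of elementary divisors under the semilinear Galois action. You are somewhat more thorough than the paper in two places --- the explicit injectivity argument via the intersection identity and the computation of distances for arbitrary (not just adjacent) vertex pairs --- but these are refinements of the same argument, not a different one.
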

\begin{proof}\footnote{This proof is written for arbitrary $p$, for this paper $p=2$.  Moreover, if one considers the tree to be injected into Berkovich space these claims are trivial.  This proof is written assuming only knowledge of the definition of the Bruhat-Tits Tree found in \cite{S1977}. }
To establish the dilation factor, it will suffice to show that an arbitrary edge of $\Tree_{\Field{F}}$ maps to a path of length $\ramInd{\Field{E}/\Field{F}}$ in $\Tree_{\Field{E}}$.\\
Let $\mathcal{L}$ and $\mathcal{L}^\prime$ denote $\oRing{\Field{F}}$-latices in $\Field{F}\times\Field{F}$.\\
Each edge in $\Tree_{\Field{F}}$ has a representetive injection $\mathcal{L}\hookrightarrow\mathcal{L}^\prime$ where $\mathcal{L}^\prime/\mathcal{L}\simeq\oRing{\Field{F}}/\pIdeal{\Field{F}}=\F{q}$ for $q$ a power of $p$.  Tensoring these lattices by $\oRing{E}$ results in an injection $\mathcal{L}_{\Field{E}}\hookrightarrow\mathcal{L}_{\Field{E}}^\prime$.  The resulting injection is representative of a path in $\Tree_{\Field{E}}$.\\
I compute the length of this path.  Since $\mathcal{L}^\prime/\mathcal{L}\simeq\oRing{\Field{F}}/\pIdeal{\Field{F}}$, the quotient $\mathcal{L}_{\Field{E}}^\prime/\mathcal{L}_{\Field{E}}$ is isomorphic to $\oRing{\Field{E}}/(\pIdeal{\Field{E}}^{\ramInd{\Field{E}/\Field{F}}})$.  Hence, the distance between the vertexes of $\mathcal{L}_{\Field{E}}^\prime$ and $\mathcal{L}_{\Field{E}}$ is $\ramInd{\Field{E}/\Field{F}}$. \\ 
To prove the extendability of the action on $\Tree_{\Field{F}}$ to $\Tree_{\Field{E}}$, I point out that the action is determined by the action on the $\oRing{\Field{F}}$-generators of $\mathcal{L}$.  Meanwhile these generators are also $\oRing{\Field{F}}$-generators of $\mathcal{L}_{\Field{E}}$.  Hence the action of $\GL{2}{\Field{F}}$ extends to its action on $\Tree_{\Field{E}}$ as a subgroup of $\GL{2}{\Field{E}}$.\\
The isometry of conjugation follows from its conservation of valuation.  This implies that conjugation also conserves the indices of injections of lattices, which determine the distances between lattices.  That it stabilizes the image follows from the construction of the injection via tensoring with  $\oRing{\Field{E}}$.\\
That the image of $\Tree_{\Field{F}}$ is fixed by the Galois action is immediate.  \end{proof}
\begin{lemma}\label{lem:Eisnstn}
If $\Field{E}/\Field{F}$ is a ramified, quadratic extension, then $\oRing{\Field{E}}=\oRing{\Field{F}}[\pElem_{\Field{E}}]$ where $\pElem_{\Field{E}}$ is a root of an Eisenstein polynomial of the form,
$x^2+ax+b\pElem_{\Field{F}}$, where $b\in\Units_{\Field{F}}$, and either $a=$ or $a\in\pIdeal{\Field{F}}^{v}\setminus\pIdeal{\Field{F}}^{v+1}$, with $2v=\val_\Field{E}(\diff{\pElem_{\Field{E}}})\leq\ramInd{\Field{F}}$.
\end{lemma}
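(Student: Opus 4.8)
The plan is to deduce everything from the standard structure theory of totally ramified extensions of local fields. Since $[\Field{E}:\Field{F}]=2$ and $\Field{E}/\Field{F}$ is ramified it is totally ramified, so by \cite{S1979} any uniformizer $\pElem_{\Field{E}}$ of $\Field{E}$ satisfies $\oRing{\Field{E}}=\oRing{\Field{F}}[\pElem_{\Field{E}}]$ and its minimal polynomial $g$ over $\Field{F}$ is Eisenstein of degree $2$; write $g(x)=x^2+ax+b\pElem_{\Field{F}}$. Here $\val_{\Field{F}}(a)\geq 1$ because $a=-\relTrace{\Field{E}/\Field{F}}{\pElem_{\Field{E}}}$ lies in $\Field{F}$ and equals a sum of two elements of $\Field{E}$-valuation $1$, and $b\in\Units_{\Field{F}}$ because $b\pElem_{\Field{F}}=\relNorm{\Field{E}/\Field{F}}{\pElem_{\Field{E}}}$ has $\Field{F}$-valuation exactly $1$. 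I will also use $\dfrnt{\Field{E}/\Field{F}}=(g'(\pElem_{\Field{E}}))$ (\cite{S1979,N1999}) together with $g'(\pElem_{\Field{E}})=2\pElem_{\Field{E}}+a=\pElem_{\Field{E}}-\galConj{\pElem_{\Field{E}}}=\diff{\pElem_{\Field{E}}}$, which is consistent with the convention used in the body of the paper. Thus only the shape of $a$ requires work, and the idea is to pick $\pElem_{\Field{E}}$ appropriately.

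The key step is a parity argument. Since $\val_{\Field{F}}(2)=\ramInd{}$ and $\ramInd{\Field{E}/\Field{F}}=2$, we have $\val_{\Field{E}}(2\pElem_{\Field{E}})=2\ramInd{}+1$, which is odd, whereas $\val_{\Field{E}}(a)=2\val_{\Field{F}}(a)$ is even; hence the two summands of $g'(\pElem_{\Field{E}})$ have distinct valuations, so
\[
\val_{\Field{E}}(\diff{\pElem_{\Field{E}}})=\min\bigl(2\ramInd{}+1,\,2\val_{\Field{F}}(a)\bigr).
\]
If this minimum is $2\val_{\Field{F}}(a)$, put $v:=\val_{\Field{F}}(a)$; then $v\leq\ramInd{}$, $2v=\val_{\Field{E}}(\diff{\pElem_{\Field{E}}})$, and $a\in\pIdeal{\Field{F}}^{v}\setminus\pIdeal{\Field{F}}^{v+1}$ already, with no change of uniformizer needed --- this is the $a\neq 0$ alternative.

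It remains to handle $\val_{\Field{E}}(\diff{\pElem_{\Field{E}}})=2\ramInd{}+1$, i.e. $\val_{\Field{F}}(a)\geq\ramInd{}+1$. Then $a/2\in\oRing{\Field{F}}$, and I would complete the square: the element $\pElem_{\Field{E}}':=\pElem_{\Field{E}}+a/2$ has $\val_{\Field{E}}(a/2)=2(\val_{\Field{F}}(a)-\ramInd{})\geq 2$, hence $\val_{\Field{E}}(\pElem_{\Field{E}}')=1$, so $\pElem_{\Field{E}}'$ is again a uniformizer with $\oRing{\Field{E}}=\oRing{\Field{F}}[\pElem_{\Field{E}}']$; a one-line substitution gives its minimal polynomial as $x^2+(b\pElem_{\Field{F}}-a^2/4)$, and since $\val_{\Field{F}}(a^2/4)=2(\val_{\Field{F}}(a)-\ramInd{})\geq 2>1$ the constant term equals $b'\pElem_{\Field{F}}$ with $b'\equiv b\mod{\pIdeal{\Field{F}}}$ a unit. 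This is the $a=0$ alternative, and absorbing $b'$ into $\pElem_{\Field{F}}$ presents $\Field{E}$ as the splitting field of $x^2-\pElem_{\Field{F}}$, as in the discussion preceding the lemma. I expect the main obstacle to be just the no-cancellation (parity) step together with recognizing that the one genuinely new possibility, $\val_{\Field{F}}(a)$ exceeding $\ramInd{}$, is exactly the extremal-different case that the completing-the-square substitution disposes of; the ambient facts ($\oRing{\Field{E}}$ monogenic over $\oRing{\Field{F}}$, $\dfrnt{\Field{E}/\Field{F}}=(g'(\pElem_{\Field{E}}))$) and the remaining valuation bookkeeping are routine.
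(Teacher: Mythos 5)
Your proof is correct and follows essentially the same route as the paper's: both rest on the monogenicity of $\oRing{\Field{E}}$ over $\oRing{\Field{F}}$, the observation that $\diff{\pElem_{\Field{E}}}$ and the trace term differ by $2\pElem_{\Field{E}}$ of odd valuation $2\ramInd{}+1$ (your no-cancellation/parity step is the paper's congruence $\relTrace{}{\pElem_{\Field{E}}}\equiv\diff{\pElem_{\Field{E}}}\mod{2\pIdeal{\Field{E}}}$), and the completion of the square $\pElem_{\Field{E}}\mapsto\pElem_{\Field{E}}+a/2$ in the extremal case, which is exactly the paper's substitution $\pElem_{\Field{E}}'-\relTrace{}{\pElem_{\Field{E}}'}/2$. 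Your explicit appeal to $\dfrnt{\Field{E}/\Field{F}}=(g'(\pElem_{\Field{E}}))$ and the sign $a=-\relTrace{}{\pElem_{\Field{E}}}$ are harmless refinements of the same argument.
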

\begin{proof}[Proof of \autoref{lem:Eisnstn}]
The ring of integers of $\Field{E}$ is generated by its local uniformizing parameter (l.u.p.) since $[\Field{E}:\Field{F}]=[\pIdeal{\Field{E}}:\pIdeal{\Field{F}}]=2$.  Such a parameter, $\pElem_{\Field{E}}$ is a root of an Eisenstein polynomial as above where $a=\relTrace{}{\pElem_{\Field{E}}}$.  If there is a trace-zero l.u.p., then $a=0$ will suffice.\\
Assume that there is a l.u.p., $\pElem_{\Field{E}}^\prime$ with trace of valuation greater than $\ramInd{\Field{F}}$.  Then $\relTrace{}{\pElem_{\Field{E}}^\prime}\!/{2}\in\pIdeal{\Field{F}}$.  Whence, $\pElem_{\Field{E}}:= \pElem_{\Field{E}}^\prime-\relTrace{}{\pElem_{\Field{E}}^\prime}\!/{2}$ is a l.u.p. of $\Field{E}$ and has trace zero. \\ 
Assume that no l.u.p. in $\Field{E}$ is trace-zero.  Any trace zero then element has even valuation.  Any l.u.p., $\pElem_{\Field{E}}$, in $\Field{E}$ then has a trace of valuation no greater than $\ramInd{\Field{F}}$.  Since $\relTrace{}{\pElem_{\Field{E}}}\equiv \diff{\pElem_{\Field{E}}}\mod {2\pIdeal{\Field{E}}}$,  $\val\bigl(\diff{\pElem_{\Field{E}}}\bigr)=\val\bigl(\relTrace{}{\pElem_{\Field{E}}}\bigr)$.
\end{proof}
\clearpage
\bibliographystyle{plain} 
{\bibliography{Stab}}
\end{document}